\def\qed{{\hbadness=10000\hfill\ \vbox{\hrule height.09ex
     \hbox{\vrule width.09ex height1.55ex depth.2ex \kern1.8ex
     \vrule width.09ex height1.55ex depth.2ex}\hrule height.09ex}\break
     \bigskip}}
\newtheorem{theorem}{Theorem}[section]
\newtheorem{lemma}[theorem]{Lemma}
\newtheorem{definition}[theorem]{Definition}
\newtheorem{observation}[theorem]{Observation}
\newtheorem{remark}[theorem]{Remark}
\newtheorem{example}[theorem]{Example}
\begin{document}
\title{An Example of a Right Loop Admitting Only Discrete Topolization}

\author{Vipul Kakkar\\
School of Mathematics, Harish-Chandra Research Insitute \\
Allahabad (India)\\
Email: vplkakkar@gmail.com}

\date{}
\maketitle

\begin{abstract}
Being motivated by \cite{sk} and \cite{nms}, an example of a right loop admitting only discrete topolization is given.  
\end{abstract}
\noindent \textbf{\textit{Key words:}} Right loop, Normalized Right Transversal.
\section{Introduction}
\begin{definition} \label{bd1}
A groupoid $(S, \circ)$ is called  a \textbf{right loop} (resp. \textbf{left loop}) if
\begin{enumerate}
 \item for~all $x, y \in S$,  the equation $X \circ x = y$ (resp. $x \circ X=y$) , where $X$ is unknown in the equation, has a unique solution in $S$. In notation we write it as $X=y/x$ (resp. $X=x \backslash y$) 
 \item if there exists $1 \in S$  such that $1\circ x = x\circ 1 =  x$ for~all $x \in S$.  
\end{enumerate}
\end{definition}
Let $(S,\circ)$ be a right loop (resp. left loop). For $u \in S$ the map $R_u^{\circ}:S\rightarrow S$ (resp. $L_u^{\circ}:S\rightarrow S$) defined by $R_u^{\circ}(x)=x \circ u$ (resp. $L_u^{\circ}(x)=u \circ x$) is a bijection on $S$. We will drop the superscript, if the binary operation is clear.      
\begin{definition}\label{bd2} A groupoid $(S, \circ)$ is called  a \textbf{loop} if it is right loop as well as left loop. A loop $S$ is said to be \textbf{commutative} if $x \circ y=y \circ x$ for all $x,y \in S$.  
\end{definition}
\begin{definition}\label{bd3} A loop $S$ is said to be \textbf{inverse property loop (I.P. loop)} if for each $x \in S$ there exists $x^{-1} \in S$ such that $x^{-1} \circ (x \circ y)=y$ and $(y \circ x) \circ x^{-1}=y$ for all $y \in S$. 
\end{definition}
Let $G$ be a group and $H$ be a subgroup of $G$. A \textit{normalized right transversal (NRT)} $S$ of $H$ in $G$ is a subset of $G$ obtained by choosing one and only one element from each right coset of $H$ in $G$ and $1 \in S$. Then $S$ has a induced binary operation $\circ$ given by $\{x \circ y\}=Hxy \cap S$, with respect to which $S$ is a right loop with identity $1$, (see \cite[Proposition 4.3.3, p.102]{smth},\cite{rltr}). Conversely, every right loop can be embedded as an NRT in a group with some universal property (see \cite[Theorem 3.4, p.76]{rltr}).

Let $\mathcal{T}(G,H)$ denote the set of all NRTs of $H$ in $G$. Let $S \in \mathcal{T}(G, H)$ and $\circ$ be the induced binary operation on $S$. Let $x, y\in S$ and $h\in H$. Then $x.y = f(x, y)(x\circ y)$ for some $f(x, y)\in H$ and $x\circ y\in S$. Also $x.h=\sigma_x (h) x\theta h $ for some $\sigma_x (h)\in H $ and $x\theta h \in S$. This gives us a map $f:S\times S\rightarrow H$ and a map $\sigma :S\rightarrow H^H$ defined by $f((x,y))=f(x, y)$ and $\sigma (x)(h)=\sigma_x (h)$. Also $\theta$ is a right action of $H$ on $S$. The quadruple $(S,H,\sigma,f)$ is a $c$-groupoid (see \cite[Definition 2.1, p. 71]{rltr}). Infact, every $c$-groupoid comes in this way (see \cite[Theorem 2.2, p.72]{rltr}). The same is observed in \cite{km} but with different notations (see \cite[Section 3, p. 289]{km}). 

\begin{definition}\label{bd4}
A right loop $(S,\circ)$ is said to be topological right loop if $S$ is a topological space and the operations $\circ$ and $/$ are continuous.
\end{definition}

Let $x,y \in S$. Then the map $t\mapsto (y/x) \circ t$ from $S$ to $S$ takes $x$ to $y$.
\begin{observation}\label{ob1} Let $(S,\circ)$ be a topological right loop. Then
\begin{enumerate} \item[(i)] \textit{the topological right loop $S$ is homogeneous.}  For any $x,y \in S$, the map $t\mapsto (y/x) \circ t$ is a homeomorphism sending $x$ to $y$.

\item[(ii)] \textit{every $T_1$ topological right loop is Hausdorff} Note that the map $f:S \times S\rightarrow S$ defined by $f(x,y)=x/y$ is countinuous, where $x,y \in S$. Then $\Delta=\{(x,x)|x \in S\}=f^{-1}({1})$ is a closed subset of $S \times S$. 
\end{enumerate} 
\end{observation}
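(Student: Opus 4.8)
The plan is to treat the two parts separately, relying only on the continuity of $\circ$ and of the right division $/$ granted by Definition~\ref{bd4}.

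For part (i) I would not use the displayed map $t \mapsto (y/x)\circ t$ to witness homogeneity. It does carry $x$ to $y$, since $(y/x)\circ x = y$ by the defining property of $/$; but it is a left translation $L_{y/x}^\circ$, and in a right loop that is not a loop such a map need not be injective, so it cannot be claimed to be a homeomorphism. Instead I would work with right translations, which are the maps with built-in invertibility: for each $u \in S$ the map $R_u^\circ \colon t \mapsto t \circ u$ is a bijection by Definition~\ref{bd1}, with inverse $s \mapsto s/u$. Both $R_u^\circ$ and this inverse are continuous (each is obtained from a continuous binary operation by fixing its second argument), so $R_u^\circ$ is a homeomorphism of $S$. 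Given $x,y \in S$ I would then form the composite $\varphi = R_y^\circ \circ (R_x^\circ)^{-1}$, that is $\varphi(t) = (t/x)\circ y$: it is a homeomorphism as a composition of homeomorphisms, and since $x/x = 1$ it sends $x$ to $(x/x)\circ y = 1\circ y = y$. This produces, for every ordered pair of points, a self-homeomorphism carrying one to the other, which is homogeneity. The one genuine subtlety — and the step I expect to be the crux — is precisely this left/right asymmetry: one must resist using $L_{y/x}^\circ$ and instead route the argument through $R_x^\circ$, $R_y^\circ$ and their inverses, all of which are legitimately homeomorphisms.

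For part (ii) I would argue through the diagonal. Set $f \colon S \times S \to S$, $f(x,y) = x/y$, which is continuous because $/$ is. The right-loop computation $x/y = 1 \iff 1\circ y = x \iff x = y$ shows that $f^{-1}(\{1\}) = \{(x,x) : x \in S\} = \Delta$. If $S$ is $T_1$ then $\{1\}$ is closed, so $\Delta = f^{-1}(\{1\})$ is closed in $S\times S$ by continuity of $f$; since a space is Hausdorff exactly when its diagonal is closed in the square, $S$ is Hausdorff. I anticipate no real difficulty here: the only things to keep straight are that the $T_1$ hypothesis is what makes $\{1\}$ closed and that closedness of the diagonal is the standard criterion for Hausdorffness.
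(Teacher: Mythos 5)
Your part (ii) reproduces the paper's argument exactly: the same map $f(x,y)=x/y$, the same identification $f^{-1}(\{1\})=\Delta$, and the same appeal to closedness of the diagonal. Your part (i), however, departs from the paper in a substantive way, and your version is the one that actually works. The paper's witness for homogeneity is the left translation $t\mapsto (y/x)\circ t$, asserted to be a homeomorphism with no justification; as you note, Definition~\ref{bd1} only makes \emph{right} translations bijective and Definition~\ref{bd4} only assumes $\circ$ and $/$ continuous, so in a right loop that is not a two-sided loop the map $L_{y/x}^{\circ}$ need not be injective, and even when it is bijective its inverse is a left division whose continuity is nowhere assumed. Your replacement $\varphi=R_y^{\circ}(R_x^{\circ})^{-1}$, i.e.\ $t\mapsto (t/x)\circ y$, is a composite of right translations and the maps $s\mapsto s/u$, all continuous by hypothesis, and it sends $x$ to $(x/x)\circ y=1\circ y=y$; this establishes homogeneity from the stated axioms alone. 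Your route is also the one the paper itself implicitly relies on later, since the homeomorphism $\alpha$ in its main example is assembled from maps $R^{\circ^{\prime}}_{u}$ and their inverses rather than from left translations. The paper's formulation buys only brevity and would be valid only under the additional hypothesis that $S$ is a loop with continuous left division.
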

\begin{remark} From now onwards, we will assume that our topological right loop is $T_1$.
\end{remark}

\section{An Example of a Right Loop Admitting Only Discrete Topolization}
Let $(U,\circ)$ be a loop. Let $e$ denote the identity of the loop $U$. Let $B \subseteq U \setminus \{e\}$ and $\eta \in Sym(U)$ such that $\eta(e)=e$. Define an operation $\circ^{\prime}$ on the set $U$ as 
\begin{equation}\label{ex1}
x \circ^{\prime} y = \left\{
\begin{array}{l l}
x \circ y & \qquad \mbox{if $y \notin B$}\\
y \circ \eta(x) & \qquad \text{if $y \in B$}\\
\end{array} \right. 
\end{equation}
It can be checked that $(U,\circ^{\prime})$ is a right loop. Let us denote this right loop as $U^B_{\eta}$. If $B=\emptyset$, then $U^B_{\eta}$ is the loop $U$ itself. If $\eta$ is fixed, then we will drop the subscript $\eta$. It can be checked that if $y \notin B$, then $R_y^{\circ^{\prime}}=R_y^{\circ}$ and if $y \in B$, then $R_y^{\circ^{\prime}}=L_y^{\circ}\eta$.

In following example, we will observe that right loop structure on each NRTs in infinite dihedral group of non-normal subgroup of order $2$ can be obtained in the manner defined in the equation (\ref{ex1}).
 
\begin{example}  Let $U=\mathbb{Z}$, the infinite cyclic group. Define a map $\eta: \mathbb{Z}\rightarrow \mathbb{Z}$ by $\eta(i)=-i$, where $i \in \mathbb{Z}$. Note that $\eta$ is a bijection on $\mathbb{Z}$. Let $\emptyset \neq B \subseteq \mathbb{Z} \setminus \{0\}$. We denote 
${\mathbb{Z}}^{B}_{\eta}$ by ${\mathbb{Z}}^{B}$.

Let $G=D_{\infty}=\langle x,y|x^2=1, xyx=y^{-1} \rangle$ and $H=\{1,x\}$. Let $N=\langle y \rangle$. Let $\epsilon :N\rightarrow H$ be a function with $\epsilon (1)=1$. Then $T_{\epsilon}=\{\epsilon(y^i)y^i|1 \leq i \leq n \} \in \mathcal{T}(G,H)$ and all NRTs $T \in \mathcal{T}(G,H)$ are of this form. 
Let $B=\{i \in \mathbb{Z}|\epsilon(y^i)=x\}$. Since $\epsilon$ is completely determined by the 
subset $B$, we shall denote  $T_{\epsilon}$ by $T_B$.
Clearly, 
the map $\epsilon(y^i)y^i\mapsto i$ from $T_{\epsilon}$ to $\mathbb{Z}^B$
is an isomorphism of right loops. So we may identify the right loop $T_B$ with 
the right loop ${\mathbb{Z}}^{B}$ by means of the above isomorphism. 
We observe that $T_{\emptyset}=N \cong \mathbb{Z}$.
\end{example}
\begin{lemma}\label{l1} Let $G$ be a group and $H$ a subgroup of $G$. Let $S \in \mathcal{T}(G,H)$ such that $S$ is an I.P. loop with respect to the induced binary operation on it. Then $L_{1/a}^{-1}=L_a$.
\end{lemma}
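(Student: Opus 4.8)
The plan is to reduce the operator identity $L_{1/a}^{-1}=L_a$ to a statement about the single element $1/a$ and then read everything off from the inverse-property axioms. Since $S$ is a loop, every left translation $L_b\colon t\mapsto b\circ t$ is a bijection of $S$, so the claim $L_{1/a}^{-1}=L_a$ is equivalent to the two compositional identities $L_{1/a}\circ L_a=\mathrm{id}_S$ and $L_a\circ L_{1/a}=\mathrm{id}_S$; unwound, these assert $(1/a)\circ(a\circ y)=y$ and $a\circ\big((1/a)\circ y\big)=y$ for all $y\in S$.

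The first step is to pin down the element $1/a$. By definition $1/a$ is the unique solution $X$ of $X\circ a=1$. On the other hand, the I.P. axiom supplies an element $a^{-1}$ with $a^{-1}\circ(a\circ y)=y$ for all $y$; taking $y=1$ and using $a\circ 1=a$ gives $a^{-1}\circ a=1$. By uniqueness of the solution to $X\circ a=1$ I conclude $1/a=a^{-1}$. This is the only genuinely substantive point, and it is precisely where the loop structure (uniqueness of quotients) enters.

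With $1/a=a^{-1}$ in hand, the first compositional identity $(1/a)\circ(a\circ y)=y$ is exactly the first I.P. axiom, so $L_{1/a}\circ L_a=\mathrm{id}_S$. For the reverse composition I would argue that since $L_{1/a}=L_{a^{-1}}$ is a bijection and $L_{a^{-1}}\circ L_a=\mathrm{id}_S$, the map $L_a$ must be the two-sided inverse of $L_{a^{-1}}$, whence $L_a\circ L_{1/a}=\mathrm{id}_S$ as well. Equivalently, one applies the injective map $L_{a^{-1}}$ to both $a\circ(a^{-1}\circ y)$ and $y$ and uses the first axiom to see that each yields $a^{-1}\circ y$, so that $a\circ(a^{-1}\circ y)=y$ follows by cancellation.

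Putting these together gives $L_{1/a}^{-1}=L_a$. I expect no real obstacle beyond the bookkeeping: the argument is driven entirely by the identification $1/a=a^{-1}$ together with the first I.P. identity, with the second composition following formally from the bijectivity of left translations in a loop. Note that the ambient group/NRT data $(G,H)$ plays no role in this particular statement; only the loop property and the I.P. axioms are needed.
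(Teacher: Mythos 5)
Your proof is correct and follows essentially the same route as the paper: identify $1/a$ with the two-sided I.P.\ inverse of $a$ and then let the inverse-property identity $a^{-1}\circ(a\circ y)=y$, together with bijectivity of left translations, do the rest. The only difference is cosmetic but welcome: where the paper cites Klim--Majid for the facts $1/a=a'$ and $(a')'=a$, you derive the identification $1/a=a^{-1}$ directly from the I.P.\ axiom at $y=1$ and the uniqueness of the solution of $X\circ a=1$, making the argument self-contained.
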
 
\begin{proof} By \cite[Proposition 3.3, p. 290; Proposition 3.6, p. 293]{km}, $1/a=a^{\prime}$, where $u^{\prime}$ denote the left inverse of $u \in S$. Let $x \in S$ and $y=L_{1/a}^{-1}(x)=L_{a^{\prime}}^{-1}(x)$ $\Rightarrow x = a^{\prime} \circ y$ $\Rightarrow (a^{\prime})^{\prime} \circ x= (a^{\prime})^{\prime} \circ (a^{\prime} \circ y)$. 

By \cite[Proposition 3.6, p.293]{km}, $(a^{\prime})^{\prime}=a$. Also, since $S$ is I.P. loop, $(a^{\prime})^{\prime} \circ (a^{\prime} \circ y)=y$. Then above equation becomes $a \circ x=y\Rightarrow L_a(x)=y$. This shows that $L_{1/a}^{-1}=L_a$.
\end{proof}
Following, we will give an example of a right loop where only discrete topolization is possible:
\begin{example} Let $G_1$ be a countable group with relation $x^2=e$, where $e$ is the identity of $G_1$ and $G_2$ be a finite group with identity $1$ on which two distinct involutive automorphism $\phi$ and $\psi$ exists. Fix two distinct elements $a,b \in G_1$. Let $(L=G_1 \times G_2, \star)$ be commutative I.P. loop as in \cite{nms}.

Let $B=\{(e,1)/(a,1),(b,1)\}$. Let $\eta \in Sym(L)$ such that $\eta(e,1)=(e,1)$. Define a binary operation $\circ^{\prime}$ on $L$ as defined in the equation (\ref{ex1}). Consider a topology on $L$ so that $L$ is a topological right loop. Then the map $\alpha=R_{(b,1)}^{\circ^{\prime}}(R_{(e,1)/(a,1)}^{\circ^{\prime}})^{-1}R_{(b,1)}^{\circ^{\prime}}(R_{(e,1)/(a,1)}^{\circ^{\prime}})^{-1}$ is a homeomorphism. One can note that $\alpha=L_{(b,1)}^{\star}(L_{(e,1)/(a,1)}^{\star})^{-1}L_{(b,1)}^{\star}(L_{(e,1)/(a,1)}^{\star})^{-1}=L_{(b,1)}^{\star}L_{(a,1)}^{\star}L_{(b,1)}^{\star}L_{(a,1)}^{\star}$ (by Lemma \ref{l1}). 

Also note that $\alpha$ is same as given in \cite{nms}. This $\alpha$ moves only finitely many points. Therefore, the right loop $(L,\circ^{\prime})$ can not be topolized in non-discrete manner.  
\end{example} 

Let $S$ be a right loop. Then the group generated by $R_a$ for all $a \in S$ is called as the \textit{right multiplication group of $S$}. At the end, we ask following question:

\textbf{Question:} Let $S$ be a right loop admitting only discrete topology. What topology can the right multiplication group of $S$ can have?


\end{document}